\documentclass{amsart}
\usepackage{graphicx}
\usepackage[dvipdfm, colorlinks, linkcolor=blue, anchorcolor=green, citecolor=red]{hyperref}
\usepackage{fourier}
\textwidth=160truemm
\textheight=215truemm
\headsep=4truemm
\topmargin=0pt
\oddsidemargin=0pt
\evensidemargin=0pt
\parindent=16pt
\vfuzz2pt 
\hfuzz2pt 
\newtheorem{theorem}{Theorem}[section]

\newtheorem{lemma}[theorem]{Lemma}
\newtheorem{proposition}[theorem]{Proposition}
\theoremstyle{definition}

\theoremstyle{remark}

\numberwithin{equation}{section}

\begin{document}
\title[A transcendental function invariant of virtual knots]{A transcendental function invariant of virtual knots}%
\author{Zhiyun Cheng}%
\address{School of Mathematical Sciences, Beijing Normal University, Laboratory of Mathematics and Complex Systems, Ministry of
Education, Beijing 100875, China}
\address{Department of Mathematics, The George Washington University, Washington, DC 20052, U.S.A.}%
\email{czy@bnu.edu.cn}%

\thanks{The author is supported by NSFC 11301028}%
\subjclass{57M27}%
\keywords{virtual knots, writhe polynomial, affine index polynomial, zero polynomial}%
\begin{abstract}
In this work we describe a new  invariant of virtual knots. We show that this transcendental function invariant generalizes several polynomial invariants of virtual knots, such as the writhe polynomial \cite{Che2013}, the affine index polynomial \cite{Kau2013} and the zero polynomial \cite{MJJ2015}.
\end{abstract}
\maketitle
\section{Introduction}
In recent years index type invariants of virtual knots have attracted a great deal of attention from researchers in virtual knot theory. Roughly speaking, this kind of virtual knot invariants are usually defined by counting the indices (also called the weights) of real crossing points in a virtual knot diagram. The first invariant of this type was introduced by Henrich in \cite{Hen2010}. It is well known that there are no degree one Vassiliev invariants in classical knot theory.  In \cite{Hen2010} Henrich constructed a sequence of Vassiliev invariants of degree one for virtual knots. By using the idea of parity discussed in \cite{Man2010}, the author defined a polynomial invariant of virtual knots, say the odd writhe polynomial \cite{Che2014}. We choose this terminology because it generalizes the odd writhe, a numerical invariant of virtual knots which was first proposed by L. Kauffman in \cite{Kau2004}. Later, these polynomial invariants were generalized independently by Y. H. Im, S. Kim and D. S. Lee \cite{Im2013}, Lena C. Folwaczny and L. Kauffman \cite{Fol2013,Kau2013}, Cheng and Gao \cite{Che2013}, S. Satoh and K. Taniguchi \cite{Shi2014}. The key point of these invariants is one can assign an index to each real crossing point such that the signed sum of crossings with the same index is preserved under the generalized Reidemeister moves. Several variations and applications of these invariants can be found in \cite{Chr2014,Jeo2015,Kim2014}.

An interesting feature of index type invariants is that one can use them to distinguish some virtual knot from its mirror image or inverse. However there also exists one obvious drawback: if a real crossing has zero index then it has no contribution to the invariant. Recently this shortcoming was improved by Myeong-Ju Jeong in \cite{MJJ2015}. In \cite{MJJ2015}, Jeong introduced the zero polynomial which focused on the real crossings with zero index. Some examples of virtual knots that have trivial writhe polynomial but nontrivial zero polynomial were given.

Inspired by Jeong's work, in this paper we will describe a new virtual knot invariant which generalizes several polynomial invariants mentioned above. For each oriented virtual knot diagram $K$ we will associate it with a transcendental function $F_K(t, s)$, which has the form $F_K(t, s)=\sum(\pm t^{g(s)})$. Here $g(s)$ is a polynomial in $s$. In Section 4 it will be found that $F_K(t, s)$ is invariant under generalized Reidemeister moves, hence it is a virtual knot invariant. We will discuss the relations between $F_K(t, s)$ and other index type polynomial invariants mentioned above. Finally, some interesting properties of $F_K(t, s)$ will be systematically studied in Section 5.

\section{Virtual knot theory and several polynomial invariants}
In this section we take a brief review of virtual knot theory and the definitions of several polynomial invariants of virtual knots.

Virtual knot theory was first introduced by L. Kauffman in \cite{Kau1999}. Roughly speaking, classical knot theory studies the embeddings of circles in 3-dimensional Euclidean space $R^3$ up to ambient isotopy. Evidently the ambient space $R^3$ can be replaced by $S^2\times I$. As an generalization of the classical knot theory, virtual knot theory studies the embeddings of circles in $\Sigma_g\times I$ up to ambient isotopy, homeomorphisms of $\Sigma_g$ and the addition or substraction of empty handles, here $\Sigma_g$ denotes the closed orientable surface with genus $g$. For simplicity we say two embeddings are \emph{stably equivalent} if one can be obtained from the other one by some operations above. When $g=0$ virtual knot theory recovers the classical knot theory \cite{Kau1999,Kup2003}.

Another useful way to understand virtual knots is to regard them as the realizations of Gauss diagrams. Given a classical knot diagram, one can find a unique Gauss diagram of it. However for a given Gauss diagram sometimes one can not realize it as a classical knot diagram. In order to settle this problem, one needs to add some virtual crossings on the knot diagram, although these virtual crossings are not indicated on the given Gauss diagram. Since the way of adding virtual crossings is not unique, besides of the original Reidemeister moves we need to add some virtual Reidemeister moves to remove the arbitrariness \cite{Gou2000}. See Figure \ref{figure1}.
\begin{figure}
\centering
\includegraphics{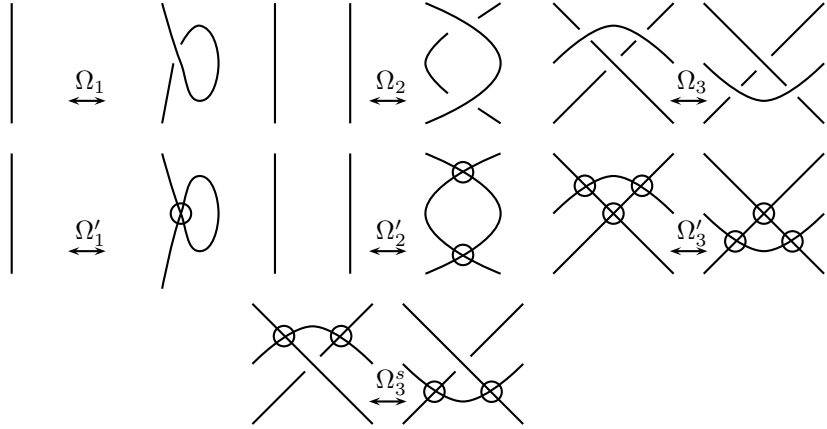}\\
\caption{Generalized Reidemeister moves}\label{figure1}
\end{figure}

Formally speaking, a virtual knot diagram can be obtained from a classical knot diagram by replacing some real crossings with virtual crossings. Usually a virtual crossing is represented by a small circle placed around the crossing point. Two virtual knot diagrams are \emph{equivalent} if they can be connected by a finite sequence of generalized Reidemeister moves. \emph{Virtual knots} can be defined as the equivalence classes of virtual knot diagrams. For a given virtual knot diagram $K$, consider the one point compactification of the plane where the diagram locates in, then one gets a 2-sphere of the diagram. Let us attach a 1-handle to a virtual crossing and regard this virtual crossing as an overpass locally. After interpreting all virtual crossings like this and thickening the surface, we will obtain an embedding of $S^1$ in $\Sigma_{c_v(K)}\times I$ where $c_v(K)$ is the number of virtual crossing points in $K$. An important fact is these two interpretations of virtual knots coincides.
\begin{theorem}{\cite{Kau1999}}
Two virtual knot diagrams are equivalent if and only if their corresponding surface embeddings are stably equivalent.
\end{theorem}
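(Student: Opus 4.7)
The plan is to prove the two implications separately, each by a local/diagrammatic analysis.

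For the forward direction (equivalent diagrams give stably equivalent embeddings), I would first fix the construction sketched in the excerpt: given a virtual diagram $K$, attach a $1$-handle at every virtual crossing so that the two strands become an over/underpass, and thicken the resulting surface to obtain an embedding $K \hookrightarrow \Sigma_{c_v(K)} \times I$. The task is then to verify move by move that each of the generalized Reidemeister moves of Figure~\ref{figure1} translates to a stable equivalence. The three classical Reidemeister moves R1, R2, R3 are supported in a disk and so extend to an ambient isotopy of $\Sigma_g \times I$. The three purely virtual moves and the mixed (semi-virtual) move do not change the isotopy class of the embedding in the thickened surface after the handles at the involved virtual crossings are attached: in each case the two local pictures, once the $1$-handles are glued in, differ by a homeomorphism of the local surface and possibly the addition/removal of an empty handle, which is exactly what ``stably equivalent'' allows. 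Thus each move is a legal operation on the embedding side.

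For the reverse direction, I would start from an embedding $L \subset \Sigma_g \times I$ and project it to $\Sigma_g$ to get a diagram $D$ on the surface with only transverse double points. Choose a cellular decomposition of $\Sigma_g$ and cut along a system of curves to obtain a polygon with side identifications; cutting meridians of the handles passes $L$ to a diagram in a planar region, where each cut arc becomes a pair of strands meeting at what we interpret as a virtual crossing. This produces a virtual knot diagram $K(D)$. The heart of the argument is to check that the resulting virtual knot class is well defined: (i) different generic projections of the same $L$ differ by classical Reidemeister moves on the surface, each of which lifts to a classical Reidemeister move on $K(D)$; (ii) different choices of the cutting system differ by handle slides and isotopies of the cut curves across $D$, which yield precisely the virtual and mixed Reidemeister moves (strands sliding across a handle produce the detour of the virtual moves, and a strand crossing a cut curve under/over another strand realizes the mixed move); (iii) surface homeomorphisms permute the handles and act by the virtual moves; (iv) adding or removing an empty handle disjoint from $L$ leaves $K(D)$ unchanged.

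The main obstacle, and the step deserving the most care, is (ii)--(iii): one must make sure that every isotopy of the cutting system relative to $L$ can be decomposed into elementary moves that each translate to one of the virtual or mixed Reidemeister moves, with no leftover ``global'' ambiguity. This is typically handled by a general-position argument combined with the ``detour move'' observation that any arc of the diagram lying in a neighborhood of a handle can be freely isotoped through virtual crossings. Once (i)--(iv) are established, the two constructions $K \mapsto (\Sigma_{c_v(K)} \times I, K)$ and $(\Sigma_g \times I, L) \mapsto K(D)$ are mutually inverse on equivalence classes, which proves the theorem.
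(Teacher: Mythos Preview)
The paper does not prove this theorem at all: it is quoted as a known result from \cite{Kau1999} (see also \cite{Kup2003}) and is stated without proof as background for the reader. So there is no ``paper's own proof'' to compare your proposal against.

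That said, your outline is a reasonable sketch of how the argument actually runs in the literature. The forward direction is essentially routine as you describe. For the reverse direction your plan is on the right track, but the step you flag as the main obstacle---showing that different cut systems yield virtually equivalent planar diagrams---is genuinely the hard part, and your description remains at the level of a strategy rather than a proof. In practice this is handled either via the abstract Gauss-code / detour-move argument (as in Kauffman's original paper) or via the more careful surface-topology analysis of Carter--Kamada--Saito and Kuperberg. If you intend to actually write this proof out, you would need to make the decomposition in (ii)--(iii) precise, which is more work than the sketch suggests; but for the purposes of the present paper none of this is needed, since the result is simply being cited.
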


Since one motivation of introducing virtual knots is to realize arbitrary Gauss diagram, sometimes it is more convenient to study the corresponding Gauss diagram than to study the virtual knot diagram. Let us give a short review of the definition of the Gauss diagram. Let $K$ be a virtual knot diagram, the \emph{Gauss diagram} of $K$, written $G(K)$, is an oriented (counterclockwise) circle where the preimages of each real crossing are indicated. For the two preimages of a real crossing we add a chord connecting them, which is directed from the overcrossing to the undercrossing. Finally each chord is associated with a sign according to the writhe of the corresponding real crossing. One simple example is given in Figure \ref{figure2}. We remark that virtual Reidemeister moves $\{\Omega_1', \Omega_2', \Omega_3', \Omega_3^s\}$ have no effect on the Gauss diagram. Therefore as an advantage, when we study virtual knots from the viewpoint of Gauss diagrams we only need to consider the classical Reidemeister moves $\{\Omega_1, \Omega_2, \Omega_3\}$.
\begin{figure}
\centering
\includegraphics{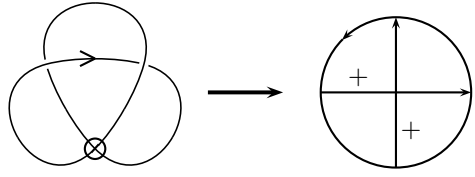}\\
\caption{An example of Gauss diagram}\label{figure2}
\end{figure}

In order to classify virtual knots, one needs to introduce some virtual knot invariants. Some classical knot invariants can be extended directly to virtual knots, such as the Jones polynomial and the knot quandle. Later these invariants were generalized using the ``virtual structure" of virtual knots. For Jones polynomial, based on Kauffman's approach to it, after smoothing all real crossings one can assign a weight to each circle. The state-sum gives a generalization of Jones polynomial for virtual knots. This were independently finished by Y. Miyazawa \cite{Miy2006,Miy2008} and H. Dye, L. Kauffman \cite{Dye2009}, now known as the Miyazawa polynomial and the arrow polynomial. On the other hand, the idea of quandle was generalized to some complicated algebraic structure, such as the biquandle \cite{Fen2004} and virtual biquandle \cite{Kau2005}. We refer the reader to \cite{Kau2012,Man2012} for more details on these developments.

Now we recall the definition of the writhe polynomial introduced in \cite{Che2013}. Later we will show how to obtain other index type virtual knot invariants from this polynomial invariant.

Let $K$ be a virtual knot diagram and $G(K)$ the corresponding Gauss diagram. Since there is a bijection between the real crossings of $K$ and the chords of $G(K)$, we will use the same letter to refer a real crossing point and the corresponding chord. Choose a chord $c$ in $G(K)$, now let us assign an index to it, which will play an important role in the definition of the writhe polynomial. Let $r_+$ $(r_-)$ denotes the number of positive (negative) chords crossing $c$ from left to right, let $l_+$ $(l_-)$ denotes the number of positive (negative) chords crossing $c$ from right to left (See Figure \ref{figure3}). Following \cite{Che2013}, we define the \emph{index} of $c$ as
\begin{center}
Ind$(c)=r_+-r_--l_++l_-$.
\end{center}
We mention some similar indices appeared in the literature. In \cite{Hen2010} Henrich defined an \emph{intersection index} for each chord, which equals to the absolute value of the index here. In \cite{Dye2013} Dye introduced a \emph{parity mapping} from the chords of a Gauss diagram to $\mathbb{Z}$, which is exactly the inverse of the index used here.
\begin{figure}
\centering
\includegraphics{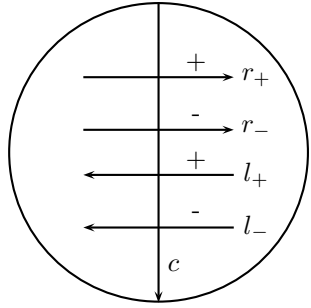}\\
\caption{Index of a chord}\label{figure3}
\end{figure}

Here we list some useful properties of Ind$(c)$, which can be easily verified.
\begin{enumerate}
  \item If $c$ is isolated, i.e. no other chord has intersection with $c$, then Ind$(c)=0$.
  \item The two crossings appeared in $\Omega_2$ have the same index.
  \item The indices of the three crossings appeared in $\Omega_3$ are invariant under $\Omega_3$.
  \item $\Omega_i$ $(i=1, 2, 3)$ preserves the indices of chords that do not appear in $\Omega_i$ $(i=1, 2, 3)$.
  \item If $K$ contains no virtual crossings, then every crossing of $K$ has index zero.
  \item Ind$(c)$ is invariant under switching some other real crossings.
\end{enumerate}

Due to the properties above, one can define a numerical invariant of virtual knots as below
\begin{center}
$Q_K=\sum\limits_{\text{Ind}(c_i)\neq0}w(c_i)$  $(c_i\in C_r(K))$,
\end{center}
where $w(c_i)$ denotes the writhe of $c_i$ and $C_r(K)$ denotes the set of real crossings of $K$.
Note that
\begin{center}
$-Q_K=\sum\limits_{\text{Ind}(c_i)=0}w(c_i)-w(K)$ $(c_i\in C_r(K))$,
\end{center}
where $w(K)$ denotes the writhe of $K$. The key observation is that $Q_K$ can be generalized by counting chords with a fixed index. For the sake of simplicity we write it in the form of a polynomial. We define the \emph{writhe polynomial} $W_K(t)$ to be
\begin{center}
$W_K(t)=\sum\limits_{\text{Ind}(c_i)\neq0}w(c_i)t^{\text{Ind}(c_i)}$.
\end{center}
Note that $W_K(1)=Q_K$. On the other hand, we remark that this definition is slightly different from that given in \cite{Che2013}, where the polynomial is equal to $W_K(t)\cdot t$. One can easily deduce that $W_K(t)$ is a virtual knot invariant from the properties listed above.

Before discovering the indices of chords, it was first observed by L. Kauffman \cite{Kau2004} that
\begin{center}
 $J(K)=\sum\limits_{\text{Ind}(c_i) \text{ is odd}}w(c_i)$,
\end{center}
which is named the \emph{odd writhe}, is a virtual knot invariant. Later this numerical invariant was generalized to the \emph{odd writhe polynomial}, which equals to
\begin{center}
$\sum\limits_{\text{Ind}(c_i) \text{ is odd}}w(c_i)t^{\text{Ind}(c_i)}$.
\end{center}
Again the definition we give here is also a bit different from the one we gave in \cite{Che2014}. Besides of the writhe polynomial, the odd writhe polynomial was independently generalized by several groups. We list the connections between them and the writhe polynomial below.
\begin{enumerate}
\item The \emph{parity writhe polynomial} $F_K(x, y)$  introduced  in \cite{Im2013} can be described as
\begin{center}
$\sum\limits_{\text{Ind}(c_i) \text{ is odd}}w(c_i)x^{\text{Ind}(c_i)+1}+\sum\limits_{\text{Ind}(c_i) \text{ is even}}w(c_i)y^{\text{Ind}(c_i)+1}-w(K)x$.
\end{center}
Note that replacing the variable $y$ with $x$ will not weaken the invariant. In this case, it coincides with $(W_K(x)-Q_K)x=(W_K(x)-W_K(1))x$.
\item The \emph{affine index polynomial} $P_K(t)$ defined in \cite{Kau2013} satisfies
\begin{center}
$P_K(t)=W_K(t)-Q_K=W_K(t)-W_K(1)$.
\end{center}
It is worth mentioning that in \cite{Kau2013} the algebraic structure behind the affine index polynomial was discussed. It was proved that this kind of index polynomial is essential the unique one derived from an affine linear flat biquandle with coefficients in a commutative ring without zero divisors.
\item The \emph{$n$th parity writhe} $J_n(K)$ \cite{Shi2014} is equal to the coefficient of $t^n$ in $W_K(t)$.
\end{enumerate}

Before ending this section we will give an interesting application of using writhe polynomial to detect the non-classicality of virtual knots. A long-standing open problem in knot theory is whether there exists a nontrivial knot with trivial Jones polynomial. However in virtual knot theory, it is well-known that there exist infinitely many nontrivial virtual knots with trivial Jones polynomial. More precisely, we have the following result.
\begin{theorem}{\cite{Kau2012}}
Let $K$ be a nontrivial classical knot, then there is a corresponding nontrivial virtual knot $v(K)$ with trivial Jones polynomial.
\end{theorem}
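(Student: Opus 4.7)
The plan is to use Kauffman's \emph{virtualization} move. Given a nontrivial classical knot $K$, I would fix a diagram $D$ together with an unknotting sequence of crossings $c_1,\ldots,c_u$, so that switching every $c_i$ turns $D$ into a diagram of the unknot. Then $v(K)$ is defined to be the virtual knot obtained from $D$ by virtualizing each $c_i$, i.e., by replacing each such crossing with the local tangle consisting of two virtual crossings flanking a classical crossing of the opposite sign.

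The first step is to verify that $V_{v(K)}(q)=1$. The key local fact is that, at the level of the Jones polynomial, virtualization at a crossing produces the same result as switching it. This is a direct Kauffman bracket computation at the tangle level: the two virtual crossings contribute nothing to the state sum and zero to the writhe, while they re-route the strands so that at the internal classical crossing the $A$- and $A^{-1}$-smoothings match exactly the smoothings of the switched original crossing. Applying this at each virtualized $c_i$ yields $V_{v(K)}(q)$ equal to the Jones polynomial of the diagram with every $c_i$ switched, which is the Jones polynomial of the unknot, namely $1$.

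The second step is to show that $v(K)$ is nontrivial as a virtual knot. For this I would invoke a virtual knot invariant that is preserved by virtualization yet distinguishes $v(K)$ from the unknot. A natural candidate is the fundamental group (or fundamental quandle) of the virtual knot complement, whose Wirtinger-type presentation can be checked to be invariant under virtualization via a local analysis at each $c_i$: the virtual crossings merely relabel arcs, and the opposite-sign interior classical crossing cancels the relabeling up to Tietze transformations. Consequently the knot group of $v(K)$ coincides with that of $K$. Since $K$ is a nontrivial classical knot, its group is not $\mathbb{Z}$, and hence $v(K)$ is not equivalent to the unknot.

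The hardest part will be the second step: identifying a virtual knot invariant that is coarse enough to ignore the virtual detours inserted by virtualization yet fine enough to detect the classical nontriviality of $K$. The Jones polynomial is by design destroyed by the construction, so a strictly stronger invariant is needed, and verifying that virtualization truly preserves such an invariant requires careful handling of the Wirtinger relations around each virtualized crossing. Should the knot group fall short, natural alternatives include the fundamental biquandle, the arrow polynomial, or even the writhe polynomial introduced in this paper, which can be shown to detect the non-classicality of $v(K)$ directly via the nonzero-index chords created by virtualization.
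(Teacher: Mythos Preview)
Your construction of $v(K)$ and your first step (that virtualization agrees with crossing-switching at the level of the Kauffman bracket, hence $V_{v(K)}=1$) match the paper exactly.

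The gap is in your second step. Virtualization does \emph{not} preserve the knot group or the full fundamental quandle. In Gauss-diagram terms, virtualizing a crossing keeps the chord's direction (which strand is over) but flips its sign; consequently the Wirtinger relation at that crossing changes from $z = y\,x\,y^{-1}$ to $z = y^{-1}x\,y$ while all generators and all other relations stay the same. Your proposed ``local analysis'' would discover exactly this discrepancy rather than a cancellation up to Tietze moves. The paper avoids this by using the \emph{involutory} quandle (kei): there the quandle operation satisfies $x*y = x*^{-1}y$, so the relation at a crossing is insensitive to the sign, and virtualization genuinely preserves the kei. Since a nontrivial classical knot has nontrivial kei, $v(K)$ inherits a nontrivial kei and is therefore nontrivial.

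Your fallback suggestions do not close the gap either. The writhe polynomial (and likewise $F_K(t,s)$) can detect non-classicality of $v(K)$ in special cases---this is precisely Proposition~2.3 and the example at the end of Section~3---but it can also vanish on $v(K)$ (the paper exhibits such an example), so it cannot serve as the general nontriviality certificate that Theorem~2.2 requires. The clean fix is simply to replace ``fundamental group/quandle'' by ``involutory quandle'' in your second step.
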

\begin{proof}
We give a sketch of the proof, see \cite{Kau2012} for more details. Choose a diagram of $K$, let us still use $K$ to denote it. We can find a set of crossings such that after switching all these crossings one will obtain a trivial knot. Now for each crossing we take a local replacement (virtualization), see Figure \ref{figure4}. Denote the new virtual knot diagram by $v(K)$. Note that switching a crossing and virtualizing a crossing have the same effect on the Jones polynomial, it follows that the Jones polynomial of $v(K)$ is trivial. However $v(K)$ has the same involutory quandle (also called \emph{kei}) as $K$, which is nontrivial since $K$ is nontrivial. Thus, $v(K)$ is a nontrivial virtual knot with unit Jones polynomial.
\end{proof}
\begin{figure}
\centering
\includegraphics{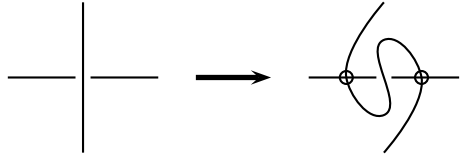}\\
\caption{Virtualization}\label{figure4}
\end{figure}

Now we can ask the following two questions.
\begin{enumerate}
\item Whether every nontrivial virtual knot that has trivial Jones polynomial can be obtained by virtualization?
\item Is $v(K)$ non-classical?
 \end{enumerate}
If both the answers are YES, then we can conclude that Jones polynomial detects the unknot. For this reason, it was suggested by L. Kauffman \cite{Kau2013} to characterize which virtualization can be detected by index type invariants. Here we give a partial answer to this question. Note that this result is in fact a special case of the main theorem in \cite{Sil2004}, which is obtained from an analysis of the knot group.
\begin{proposition}
Let $K$ be a positive (or negative) classical knot diagram with unknotting number 1. Then the corresponding virtual knot $v(K)$ is non-classical.
\end{proposition}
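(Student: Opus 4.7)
The plan is to use the writhe polynomial $W_K(t)$ as a separating invariant. By property (5), every chord in a classical Gauss diagram has index zero, so $W_{K'}(t)=0$ for every classical knot diagram $K'$; since $W$ is invariant under generalized Reidemeister moves, any virtual knot with nonzero writhe polynomial must be non-classical. I would therefore reduce the proposition to showing $W_{v(K)}(t)\neq 0$, where $v(K)$ denotes the virtualization of the unique unknotting crossing $c_0$ of $K$.

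The heart of the argument is a local analysis of the Gauss diagram. Because virtual crossings do not appear on Gauss diagrams, $G(v(K))$ and $G(K)$ have the same chord set, and only the chord associated to $c_0$ is altered; inspection of Figure \ref{figure4} shows that the modification reverses the contribution of $c_0$ to the index of any other chord that it intersects. Such a contribution has the form $\mathrm{sgn}(c_0)$ times a direction-dependent factor $\pm 1$, and virtualization flips exactly one of these two factors (not both, since flipping both would give precisely the Gauss diagram of the switched $K$, i.e.\ the unknot, contradicting Theorem 2.2). Using this, for every chord $c'\neq c_0$ I would compute: if $c'$ does not intersect $c_0$, then $\mathrm{Ind}_{v(K)}(c')=\mathrm{Ind}_K(c')=0$ by property (5); if $c'$ does intersect $c_0$, then the contribution flip yields $\mathrm{Ind}_{v(K)}(c')=0\pm 2=\pm 2$. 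The chord $c_0$ itself has index $0$ in $v(K)$ as well, since its own index depends only on the other (unaltered) chords.

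The positivity hypothesis on $K$ is now used to prevent cancellation. Every real crossing $c'\neq c_0$ has writhe $+1$ in both $K$ and $v(K)$, hence
\[
W_{v(K)}(t)=\sum_{\substack{c'\neq c_0\\ c'\cap c_0\neq\emptyset}}t^{\pm 2}=n_+t^{2}+n_-t^{-2}
\]
with $n_+,n_-\geq 0$. All coefficients being nonnegative, the polynomial vanishes only when $c_0$ is isolated in $G(K)$, i.e.\ when $c_0$ is nugatory. But switching a nugatory crossing does not change the knot type, so if $c_0$ were nugatory the nontrivial knot $K$ would be isotopic to its unknotting switch, namely the unknot, a contradiction. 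Hence $W_{v(K)}(t)\neq 0$ and $v(K)$ is non-classical. The argument for negative $K$ is identical with all signs reversed. I expect the only delicate step to be verifying the Gauss-diagram effect of virtualization at the start of the second paragraph; once that is in hand, the rest is a short calculation and the positivity-plus-nugatory remark.
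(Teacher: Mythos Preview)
Your proposal is correct and follows essentially the same strategy as the paper: show that $W_{v(K)}(t)\neq 0$ by analysing how virtualization of the single unknotting crossing $c_0$ alters indices in the Gauss diagram, use positivity to ensure all contributing writhes are $+1$, and rule out the degenerate case via the nugatory-crossing observation.

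Two small points of comparison. First, the paper is more direct about the Gauss-diagram effect of virtualization: it simply asserts (from Figure~\ref{figure4}) that $G(v(K))$ is obtained from $G(K)$ by changing the sign of the chord $c_0$, rather than arguing indirectly via Theorem~2.2 as you do. Your elimination argument rules out ``flip both factors'' but does not explicitly exclude ``flip neither''; that case is also impossible (e.g.\ $v(K)$ has trivial Jones polynomial while $K$ does not), but it would be cleaner just to read the effect off the figure. Second, the paper actually obtains the sharper formula $W_{v(K)}(t)=at^{2}+at^{-2}$ with a single $a>0$, using that $\mathrm{Ind}(c_0)=0$ in the positive diagram forces the number of chords crossing $c_0$ from each side to coincide. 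You only record $n_{+}t^{2}+n_{-}t^{-2}$ with $n_{\pm}\geq 0$; that is enough for non-vanishing, so nothing is lost for the proposition itself.
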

\begin{proof}
Without loss of generality, we assume that $K$ is positive. Choose a crossing $c$ such that switching $c$ will unknot $K$. Let us consider the Gauss diagram $G(K)$. First notice that $c$ can not be an isolated chord in $G(K)$, otherwise $c$ is a nugatory crossing and switching $c$ preserves $K$. After taking virtualization on $c$, we obtain the Gauss diagram $G(v(K))$, which can be derived from $G(K)$ by changing the sign of $c$. Since $K$ is a classical knot diagram, it follows that Ind$(c)=0$ in $G(K)$. Let $a$ $(\neq 0)$ denotes the number of positive chords crossing $c$ from left to right, then there are also $a$ positive  chords crossing $c$ from right to left. Now it is an easy exercise to check that $W_{v(K)}(t)=at^2+at^{-2}$, which is nontrivial.
\end{proof}

\section{The definition of $F_K(t, s)$}
In this section we give the definition of $F_K(t, s)$. The main idea of the construction is to replace the integer Ind$(c)$ in $W_K(t)$ by a polynomial $g_c(s)$.

Let $K$ be a virtual knot diagram and $c$ a real crossing of it. Denote the chords crossing $c$ from left to right and the chords crossing $c$ from right to left by $\{r_1, \cdots, r_n\}$ and $\{l_1, \cdots, l_m\}$ respectively. Let $\phi$ denotes the canonical quotient map from $\mathbb{Z}$ to $\mathbb{Z}_{|\text{Ind}(c)|}$. Now we define the \emph{index function} $g_c(s)$ as below
\begin{center}
$g_c(s)=\sum\limits_{i=1}^nw(r_i)s^{\phi(\text{Ind}(r_i))}-\sum\limits_{i=1}^mw(l_i)s^{\phi(-\text{Ind}(l_i))}$.
\end{center}
More precisely, if Ind$(c)=0$, in this case $\phi=id$, therefore
\begin{center}
$g_c(s)=\sum\limits_{i=1}^nw(r_i)s^{\text{Ind}(r_i)}-\sum\limits_{i=1}^mw(l_i)s^{-\text{Ind}(l_i)}$.
\end{center}
If Ind$(c)=\pm1$, then $\phi$ sends every integer into zero, hence
\begin{center}
$g_c(s)=\sum\limits_{i=1}^nw(r_i)-\sum\limits_{i=1}^mw(l_i)=\text{Ind}(c)$.
\end{center}
In general, $g_c(s)$ takes values in $\mathbb{Z}[s, s^{-1}]/(s^{|\text{Ind}(c)|}-1)$.

For the writhe polynomial $W_K(t)$ and the numerical invariant $Q_K$, we define a refined version for each of them by letting
\begin{center}
$W_K(t, s)=\sum\limits_{\text{Ind}(c_i)\neq0}w(c_i)t^{g_{c_i}(s)}$, and $Q_K(t, s)=w(K)-\sum\limits_{\text{Ind}(c_i)=0}w(c_i)t^{g_{c_i}(s)}$.
\end{center}
Now we define the function $F_K(t, s)$ as
\begin{center}
$F_K(t, s)=\sum\limits_{c_i}w(c_i)t^{g_{c_i}(s)}-w(K)=W_K(t, s)-Q_K(t, s).$
\end{center}
We remark that according to the definition of the index function, it is easy to see that $g_{c_i}(1)=\text{Ind}(c_i)$. Hence $g_{c_i}(s)$ in fact takes values in $\mathbb{Z}[s, s^{-1}]/(s^{|g_{c_i}(1)|}-1)$, which guarantees the definition above is well-defined.
\begin{theorem}
$F_K(t, s)$ is a virtual knot invariant.
\end{theorem}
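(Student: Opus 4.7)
The plan is to verify invariance directly under each of the generalized Reidemeister moves. As already pointed out, the four virtual moves $\Omega_1',\Omega_2',\Omega_3',\Omega_3^s$ leave the Gauss diagram $G(K)$ untouched and therefore preserve every ingredient (the writhe $w(K)$, the chord set, the signs $w(c)$, the indices $\text{Ind}(c)$, and hence the index functions $g_c(s)$) entering the definition of $F_K(t,s)$. So only the three classical moves $\Omega_1,\Omega_2,\Omega_3$ need to be treated, and the properties (1)--(4) of $\text{Ind}(c)$ listed in the previous section already carry most of the weight; the new work is to upgrade those invariance statements from the single integer $\text{Ind}(c)$ to the full index function $g_c(s)$.

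For $\Omega_1$, the added (or removed) chord $c$ is isolated, so by property (1) we have $\text{Ind}(c)=0$ and both sums defining $g_c(s)$ are empty, giving $g_c(s)=0$ and $w(c)t^{g_c(s)}=w(c)$. This is cancelled by the simultaneous change $\Delta w(K)=w(c)$ in the subtracted term of $F_K$, and since $c$ is isolated, it does not enter any other $g_{c'}(s)$, so no further term is affected. For $\Omega_2$, the two new chords $c_1,c_2$ have opposite writhes (so $w(K)$ is unchanged) and, by property (2), share the same index. A short local analysis at the four nearby endpoints on the circle shows that any external chord $c'$ crosses $c_1$ if and only if it crosses $c_2$, and from the same side; combined with $w(c_1)+w(c_2)=0$ this cancels the pair's contribution to every external $g_{c'}(s)$, and by the same symmetry yields $g_{c_1}(s)=g_{c_2}(s)$ (in the sub-case where $c_1,c_2$ additionally cross each other in $G(K)$, the opposite writhes together with $\text{Ind}(c_1)=\text{Ind}(c_2)$ again rescue the equality, since both $s^{\phi(\text{Ind}(c_2))}$ and $s^{\phi(-\text{Ind}(c_2))}$ reduce to $1$ in $\mathbb{Z}[s,s^{-1}]/(s^{|\text{Ind}(c_1)|}-1)$). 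Hence $w(c_1)t^{g_{c_1}(s)}+w(c_2)t^{g_{c_2}(s)}=0$ and $F_K$ is unchanged.

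The main obstacle is $\Omega_3$. The three crossings involved pairwise cross in $G(K)$; by property (3) their indices are preserved, and by property (4) the indices of all remaining chords are preserved as well. The easier half is that $g_{c'}(s)$ is preserved for every external $c'$: since the six endpoints of $c_1,c_2,c_3$ remain in a small neighborhood on the circle throughout the move, no endpoint of any external chord sweeps across any $c_i$, so the signed left/right data of $c'$ with each $c_i$ is preserved. The harder half is that each $g_{c_i}(s)$ is itself preserved. Here the endpoints of the other two chords really do slide across $c_i$ inside the local region, so the individual left/right signed contributions entering $g_{c_i}$ genuinely change; the point is that, after imposing the constraints that the three writhes and three indices are each preserved, the resulting shifts in $g_{c_1}(s),g_{c_2}(s),g_{c_3}(s)$ cancel out. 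This step requires running through the oriented variants of $\Omega_3$ case by case (or equivalently reducing to the braid-like $\Omega_3$ via the already-established $\Omega_1,\Omega_2$-invariance), and is exactly where the bookkeeping effort lies. Finally, $w(K)$ is preserved since no crossings are created or destroyed, and the quotient ring $\mathbb{Z}[s,s^{-1}]/(s^{|\text{Ind}(c_i)|}-1)$ in which each $g_{c_i}(s)$ lives is the same before and after each move, so all the identifications between exponents are well-defined.
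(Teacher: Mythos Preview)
Your treatment of the virtual moves, $\Omega_1$, and $\Omega_2$ is essentially the paper's argument and is fine (the sub-case in $\Omega_2$ where $c_1,c_2$ cross each other in $G(K)$ never actually occurs---the two chords in an $\Omega_2$ move are always parallel on the circle---but the extra caution does no harm).

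The genuine gap is at $\Omega_3$. You correctly localize the difficulty to showing that each $g_{c_i}(s)$ is individually preserved, and you note that the mutual contributions of $c_1,c_2,c_3$ to one another really do change under the move. But you then say only that ``the resulting shifts \ldots\ cancel out'' and defer this to unspecified bookkeeping. That bookkeeping does not close from the constraints you name (preservation of the three writhes and the three indices); an extra identity is needed. The paper isolates it as a lemma: for the three crossings $a,b,c$ in an $\Omega_3$ configuration one has
\[
\text{Ind}(a)-\text{Ind}(b)+\text{Ind}(c)=0.
\]
This relation, \emph{together with} the fact that $g_{c_i}(s)$ lives in $\mathbb{Z}[s,s^{-1}]/(s^{|\text{Ind}(c_i)|}-1)$, is exactly what makes each shift vanish. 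For instance, the change in $g_a(s)$ across the move is $s^{\phi_a(-\text{Ind}(b))}-s^{\phi_a(-\text{Ind}(c))}$; using the relation this becomes $s^{\phi_a(-\text{Ind}(a)-\text{Ind}(c))}-s^{\phi_a(-\text{Ind}(c))}$, which is zero precisely because $\phi_a$ kills multiples of $\text{Ind}(a)$. The analogous computations for $g_b$ and $g_c$ go the same way. Without this mechanism there is no reason the case-by-case verification would succeed, and indeed the presence of the quotient map $\phi$ in the definition of $g_c(s)$ is tailored to make exactly this step work. Your final sentence mentions that the quotient ring is ``the same before and after each move,'' but you never actually use the quotient to absorb the shift; that is the missing idea.
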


Here we make some remarks on this invariant. First, in fact it is not essential to use two variables $t$ and $s$. One can easily find that replacing $s$ with $t$ will not weaken this invariant. However it is more convenient to regard it as a generalization of some other polynomial invariants if we use $F_K(t, s)$ rather than $F_K(t, t)$. For example, we have
\begin{center}
 $W_K(t, 1)=W_K(t), Q_K(t, 1)=Q_K$, thus $F_K(t, 1)=P_K(t)$.
\end{center}
On the other hand, Myeong-Ju Jeong introduced the zero polynomial in \cite{MJJ2015}, which was defined by
\begin{center}
$Z_K(t)=\sum\limits_{\text{Ind}(c_i)=0}w(c_i)(t^{g_{c_i}}-1)$, where $g_{c_i}=\sum\limits_{\text{Ind}(r_j)=0}w(r_j)-\sum\limits_{\text{Ind}(l_j)=0}w(l_j)$.
\end{center}
The zero polynomial can be used to detect the non-classicality of some virtual knots that have trivial writhe polynomials. According to the definition above one can easily deduce that
\begin{center}
$Z_K(t)=Q_K-Q_K(t, 0)$.
\end{center}
Together with the discussion in Section 2, we see that $F_K(t, s)$ indeed generalises all the polynomial invariants mentioned in Section 1.

Another important thing we want to point out is, $W_K(t, s)$ and $Q_K(t, s)$ are mutually independent. Let us make a bit more explanation about this. At the beginning of this story, we have two numerical invariants of virtual knots,
\begin{center}
$Q_K=\sum\limits_{\text{Ind}(c_i)\neq0}w(c_i)$ and $\sum\limits_{\text{Ind}(c_i)=0}w(c_i)-w(K)$.
\end{center}
Obviously they are mutually the inverse of each other. After the first generalization, we have two polynomial invariants,
\begin{center}
$W_K(t)=\sum\limits_{\text{Ind}(c_i)\neq0}w(c_i)t^{\text{Ind}(c_i)}$ and $\sum\limits_{\text{Ind}(c_i)=0}w(c_i)t^{\text{Ind}(c_i)}-w(K)$.
\end{center}
We can regard the first part as the contributions from the chords with nontrivial indices, and the second part contains the contributions from the chords with index zero. However, the second part is preserved hence it can be derived from the first part. After the second generalization, now we have two transcendental function invariants of virtual knots,
\begin{center}
$W_K(t, s)=\sum\limits_{\text{Ind}(c_i)\neq0}w(c_i)t^{g_{c_i}(s)}$ and $-Q_K(t, s)=\sum\limits_{\text{Ind}(c_i)=0}w(c_i)t^{g_{c_i}(s)}-w(K)$.
\end{center}
Later we will give some examples to show that $W_K(t, s)$ and $Q_K(t, s)$ are mutually independent, i.e. neither can be derived directly from the other one. Since
\begin{center}
$W_K(1, s)=Q_K=Q_K(t, 1)$,
\end{center}
$W_K(t, s)$ and $Q_K(t, s)$ can be viewed as two different generalizations of our original numerical invariant $Q_K$.

Now we give one example to explain how to calculate $F_K(t, s)$. Let us consider the virtual knot $K$, the Gauss diagram $G(K)$ is depicted in Figure \ref{figure5}. First we write down the index of each chord
\begin{center}
$\text{Ind}(c_1)=\text{Ind}(c_2)=2, \text{Ind}(c_3)=\text{Ind}(c_4)=-1, \text{Ind}(c_5)=0$.
\end{center}
Next we list the index function of each chord
\begin{center}
$g_{c_1}(s)=s+1, g_{c_2}(s)=2, g_{c_3}(s)=g_{c_4}(s)=-1, g_{c_5}(s)=s^{-2}-s^{-1}$.
\end{center}
It follows that
\begin{center}
$F_K(t, s)=t^{s+1}-t^2+t^{-1}-t^{-1}+t^{s^{-2}-s^{-1}}-1=t^{s+1}-t^2+t^{s^{-2}-s^{-1}}-1$.
\end{center}
 We remark that in this example the writhe polynomial and zero polynomial are both trivial.
\begin{figure}
\centering
\includegraphics{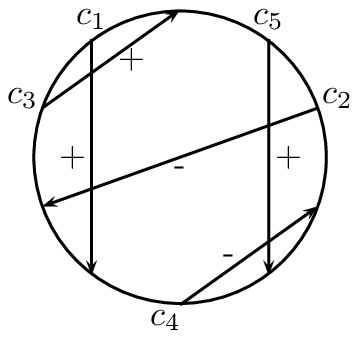}\\
\caption{$G(K)$}\label{figure5}
\end{figure}
\begin{figure}
\centering
\includegraphics{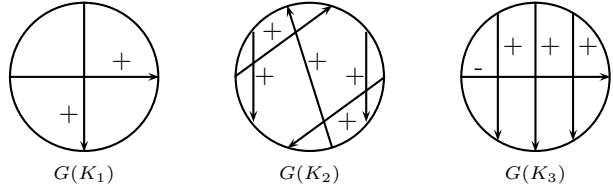}\\
\caption{Gauss diagrams of $K_1, K_2, K_3$}\label{figure6}
\end{figure}

The second example concerns the three virtual knots described in Figure \ref{figure6}. Direct calculation shows that
\begin{flalign*}
&W_{K_1}(t, s)=t+t^{-1}, Q_{K_1}(t, s)=2,\\
&W_{K_2}(t, s)=t+t^{-1}, Q_{K_2}(t, s)=-t^{1-s^{-1}}-t^{s^{-1}-1}+4,\\
&W_{K_3}(t, s)=3t^{-1}-t^{-3s}, Q_{K_3}(t, s)=2.
\end{flalign*}
Note that
\begin{center}
$W_{K_1}(t, s)=W_{K_2}(t, s), Q_{K_1}(t, s)=Q_{K_3}(t, s)$,
\end{center}
but
\begin{center}
$Q_{K_1}(t, s)\neq Q_{K_2}(t, s), W_{K_1}(t, s)\neq W_{K_3}(t, s)$.
\end{center}
This implies that in general $W_K(t, s)$ and $Q_K(t, s)$ are mutually independent, as we claimed before.

Theorem 2.3 tells us that writhe polynomial can be used to detect the non-classicality of the virtualization of some classical knots. However there exists some classical knot whose virtualization has the trivial writhe polynomial. The third example was suggested by L. Kauffman in \cite{Kau2013}. The classical knot $K$ illustrated in Figure \ref{figure12} has unknotting number 3. For example, switching crossings $\{c_1, c_2, c_3\}$ yields the trivial knot. Straightforward calculation shows that $W_{v(K)}(t)=0$ but $F_{v(K)}(t, s)=t^{-2s^2-2}-t^{-3s^2-1}-t^{s^2-1}+t^{s^{-4}-1}$, which means $v(K)$ is non-classical and hence nontrivial.
\begin{figure}
\centering
\includegraphics{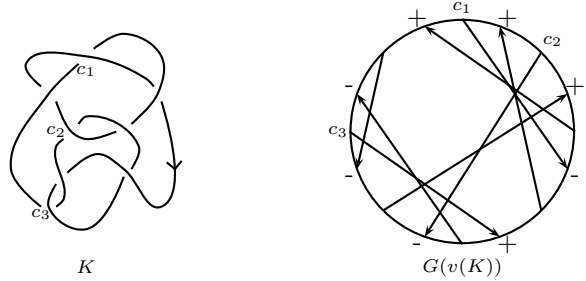}\\
\caption{A classical knot and the Gauss diagram of its virtulization}\label{figure12}
\end{figure}

\section{The proof of Theorem 3.1}
In this section we give the proof of Theorem 3.1. Before doing this, we need a simple lemma.
\begin{lemma}
Let $a, b, c$ be the three crossings of the virtual knot diagram locally described on the left side of Figure \ref{figure7}. Then Ind$(a)$-Ind$(b)$+Ind$(c)=0$.
\end{lemma}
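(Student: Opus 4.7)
The plan is to reduce the identity to a one-line telescoping calculation by passing through the affine-index labeling of arcs. Recall that one can label every arc of an oriented virtual knot diagram by an integer so that, at each real crossing, the overstrand keeps its label while the two pieces of the underarc differ by a prescribed signed amount depending on the writhe; with the conventions of \cite{Che2013,Kau2013}, Ind$(c)$ equals a fixed signed difference of labels of two specific arcs incident to $c$, and this is known to agree with the chord-counting definition recalled in Section~2.

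Granting this reformulation, I would first read off Figure~\ref{figure7} to identify the three short arcs forming the small triangle bounded by $a$, $b$, $c$, together with the over/under data at each vertex. I would then substitute into the arc-label formula for each of Ind$(a)$, Ind$(b)$, Ind$(c)$, writing each as a signed difference of labels carried by two sides of the triangle. Because the three sides close up into a loop, each triangle-side label enters the combination $\text{Ind}(a)-\text{Ind}(b)+\text{Ind}(c)$ exactly twice with opposite signs, so the total telescopes to $0$.

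A self-contained alternative, if one prefers to stay inside the Gauss-diagram formalism, is a direct bookkeeping argument: the three chords of $G(K)$ corresponding to $a$, $b$, $c$ cut the outer circle into six arcs, and for every other chord $d$ one checks case by case, according to which pair of arcs contains the two endpoints of $d$ and the sign and direction of $d$, that its contributions to Ind$(a)$, Ind$(b)$, Ind$(c)$ combine (with the alternating signs $+,-,+$) to zero; the mutual contributions of $a$, $b$, $c$ among themselves are then verified by hand from the specific pattern of Figure~\ref{figure7}. The main obstacle along this direct route is the number of cases and the careful sign-tracking they require, which is exactly why I would prefer the affine-label approach: once the label formula for Ind is in hand, the lemma becomes a loop of differences summing to zero, with no case analysis at all.
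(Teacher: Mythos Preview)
Your proposal is correct, and your preferred route is genuinely different from the paper's. The paper stays entirely inside the Gauss-diagram chord-counting definition: it splits into the two possible global closures $G(K)_1$, $G(K)_2$, and for each one groups the remaining chords according to which pair among $\{a,b,c\}$ they cross (using property~(6) to assume all such chords are positive), getting $\text{Ind}(a)=x-z$, $\text{Ind}(b)=-y-z$, $\text{Ind}(c)=-x-y$ and hence the vanishing of the alternating sum. This is in fact your ``self-contained alternative,'' but executed more efficiently than you anticipate: rather than a six-arc case split over endpoints of $d$, one only records which two of $a,b,c$ the chord $d$ meets, reducing the bookkeeping to three integers $x,y,z$.

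Your affine-label approach trades this bookkeeping for an appeal to the known equivalence between the chord-counting index and the arc-label weight of \cite{Che2013,Kau2013}; once that is granted, the identity becomes a cocycle/telescoping statement for a triangle of arcs, uniform in the two closures. One small caution: the formula for $\text{Ind}$ at a crossing typically involves an incoming arc that lies \emph{outside} the small triangle, so the telescoping is not literally ``each triangle label twice with opposite signs''; you must first use the labeling rule at each vertex to rewrite outer-arc labels in terms of the adjacent triangle-arc labels (picking up $\pm1$ shifts that themselves cancel in the alternating sum). With that caveat, the argument goes through and is the standard one used to verify $\Omega_3$-invariance of the affine index polynomial.
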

\begin{proof}
According to the connecting ways outside of the local diagram, there are two possibilities of the Gauss diagram of $K$, say $G(K)_1$ and $G(K)_2$ (see Figure \ref{figure7}).

Let us consider $G(K)_1$. Due to the sixth property of index we listed in Section 2, without loss of generality, we assume there are $x, y, z$ positive chords intersects $a$ and $c$, $b$ and $c$, $a$ and $b$ respectively. See the middle graph of Figure \ref{figure7} for the directions of these chords. It follows directly that
\begin{center}
Ind$(a)=x-z$, Ind$(b)=-y-z$, Ind$(c)=-x-y$,
\end{center}
which implies that Ind$(a)$-Ind$(b)$+Ind$(c)=0$. The proof for $G(K)_2$ is analogous.
\end{proof}
\begin{figure}
\centering
\includegraphics{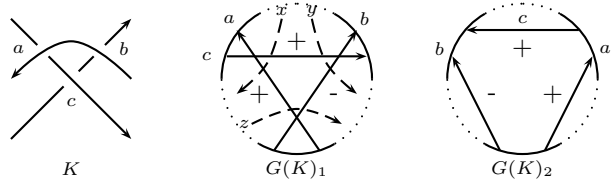}\\
\caption{Two possibilities of the Gauss diagram of $K$}\label{figure7}
\end{figure}

\begin{figure}
\centering
\includegraphics{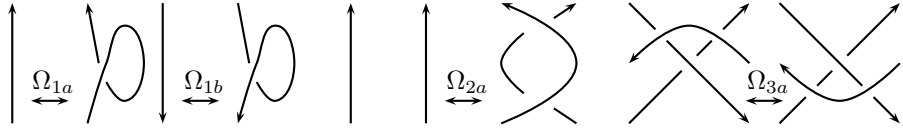}\\
\caption{A generating set of Reidemeister moves}\label{figure8}
\end{figure}
Now we give the proof of Theorem 3.1.
\begin{proof}
It suffices to check that $F_K(t, s)$ is invariant under all Reidemeister moves illustrated in Figure \ref{figure8} \cite{Pol2010}.

The only chord involved in $\Omega_{1a}$ is isolated. Therefore it has index zero and the indices of other chords are invariant. Consequently the index function of the isolated chord is zero, and the index functions of other chords are invariant under $\Omega_{1a}$. It is obvious that $F_K(t, s)$ is preserved under $\Omega_{1a}$. Similarly one can prove that $\Omega_{1b}$ also preserves $F_K(t, s)$.

For $\Omega_{2a}$, first we notice that the two chords involved have the same index but different writhes. Secondly, if a chord intersects one of them then it also has nonempty intersection with the other one. For these reasons, the two chords involved in $\Omega_{2a}$ have the same index functions but different writhes. It follows that the contributions from these two chords  to $F_K(t, s)$ will cancel out. On the other hand, these two chords will not affect the index function of any other chord, which implies that the contribution from any chord which is not involved in $\Omega_{2a}$ is invariant under $\Omega_{2a}$. We conclude that $F_K(t, s)$ is invariant under $\Omega_{2a}$.

Now let us consider the third Reidemeister move $\Omega_{3a}$. As we mentioned in Lemma 4.1, there exist two possibilities for the Gauss diagram of the whole knot diagram. We only consider the one corresponding to $G(K)_1$, the proof for $G(K)_2$ is quite similar.

The variation of the Gauss diagram $G(K)_1$ under $\Omega_{3a}$ is depicted in Figure \ref{figure9}. Recall the properties (3) and (4) mentioned in Section 2, the index of each chord is preserved under $\Omega_{3a}$. For any chord $d$ which is not involved in $\Omega_{3a}$,  it is not difficult to observe that the index function $g_d(s)$ is invariant. Therefore it suffices to consider the behaviors of $g_a(s), g_b(s), g_c(s)$ under $\Omega_{3a}$.

For the Gauss diagram on the left side of Figure \ref{figure9}, we have
\begin{flalign*}
&g_a(s)=a(s)+s^{\phi_a(-\text{Ind}(b))}-s^{\phi_a(-\text{Ind}(c))},\\
&g_b(s)=b(s)+s^{\phi_b(\text{Ind}(a))}-s^{\phi_b(-\text{Ind}(c))},\\
&g_c(s)=c(s)+s^{\phi_c(\text{Ind}(a))}-s^{\phi_c(\text{Ind}(b))}.
\end{flalign*}
Here $a(s), b(s), c(s)$ count the contributions from those chords not involved in $\Omega_{3a}$ to $g_a(s), g_b(s), g_c(s)$ respectively, $\phi_a, \phi_b, \phi_c$ denote the projection from $\mathbb{Z}$ to $\mathbb{Z}_{|\text{Ind}(a)|}, \mathbb{Z}_{|\text{Ind}(b)|}, \mathbb{Z}_{|\text{Ind}(c)|}$ respectively. For the Gauss diagram on the right side of Figure \ref{figure9}, one computes that
\begin{flalign*}
g_a(s)=a(s), g_b(s)=b(s), g_c(s)=c(s).
\end{flalign*}
Recall that Lemma 4.1 tells us Ind$(a)$-Ind$(b)$+Ind$(c)$=0. Thus
\begin{flalign*}
&s^{\phi_a(-\text{Ind}(b))}-s^{\phi_a(-\text{Ind}(c))}=s^{\phi_a(-\text{Ind}(a)-\text{Ind}(c))}-s^{\phi_a(-\text{Ind}(c))}=0,\\
&s^{\phi_b(\text{Ind}(a))}-s^{\phi_b(-\text{Ind}(c))}=s^{\phi_b(\text{Ind}(b)-\text{Ind}(c))}-s^{\phi_b(-\text{Ind}(c))}=0,\\
&s^{\phi_c(\text{Ind}(a))}-s^{\phi_c(\text{Ind}(b))}=s^{\phi_c(\text{Ind}(b)-\text{Ind}(c))}-s^{\phi_c(\text{Ind}(b))}=0,\\
\end{flalign*}
The proof is completed.
\end{proof}
\begin{figure}
\centering
\includegraphics{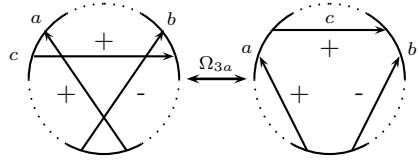}\\
\caption{The variation of $G(K)_1$ under $\Omega_{3a}$}\label{figure9}
\end{figure}

\section{Some properties of $F_K(t, s)$}
In this section we listed some basic properties of $F_K(t, s)$. First of all, similar to the other index type invariants, $F_K(t, s)$ is good at detecting the non-classicality of virtual knots.
\begin{proposition}
Let $K$ be a virtual knot diagram, if all real crossings of $K$ have trivial indices, then $F_K(t, s)=0$. In particular, classical knots have trivial $F_K(t, s)$.
\end{proposition}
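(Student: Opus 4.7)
The plan is to unwind the definition of $F_K(t,s)$ under the hypothesis and observe that the exponents $g_{c_i}(s)$ all collapse to zero, which will reduce the sum to the writhe $w(K)$ and cancel the $-w(K)$ term. The argument is almost entirely bookkeeping; the only real input is the explicit formula for $g_c(s)$ when $\mathrm{Ind}(c)=0$.

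First I would fix an arbitrary real crossing $c$ of $K$ and expand $g_c(s)$ using the case $\mathrm{Ind}(c)=0$ of the definition, namely
\[
g_c(s)=\sum_{i=1}^n w(r_i)\,s^{\mathrm{Ind}(r_i)}-\sum_{i=1}^m w(l_i)\,s^{-\mathrm{Ind}(l_i)},
\]
where $\{r_1,\dots,r_n\}$ and $\{l_1,\dots,l_m\}$ are the chords crossing $c$ from left to right and from right to left respectively. The key observation is that, by assumption, every $r_i$ and every $l_j$ also has $\mathrm{Ind}(r_i)=\mathrm{Ind}(l_j)=0$, so each $s$-power in the above expression is simply $s^0=1$. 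Hence $g_c(s)$ collapses to $\sum_{i=1}^n w(r_i)-\sum_{j=1}^m w(l_j)$, which in the notation of Section~2 is exactly $(r_+-r_-)-(l_+-l_-)=\mathrm{Ind}(c)$. By hypothesis this is $0$, so $g_c(s)=0$ for every real crossing $c$.

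With $t^{g_{c_i}(s)}=1$ for all $c_i$, the definition of $F_K(t,s)$ yields
\[
F_K(t,s)=\sum_{c_i}w(c_i)\,t^{g_{c_i}(s)}-w(K)=\sum_{c_i}w(c_i)-w(K)=w(K)-w(K)=0,
\]
proving the first assertion. For the second assertion I would appeal directly to property (5) from Section~2: if $K$ contains no virtual crossings, then every real crossing of $K$ has index zero, so the hypothesis of the proposition applies and $F_K(t,s)=0$.

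There is no genuine obstacle here; the only subtlety to watch is the degenerate modulus. Since all relevant indices are zero, the projection $\phi$ in the definition of $g_c(s)$ is the identity (as spelled out in the $\mathrm{Ind}(c)=0$ case), so one never has to worry about interpreting $\mathbb{Z}[s,s^{-1}]/(s^{|\mathrm{Ind}(c)|}-1)$ in a degenerate quotient. Everything else is a one-line rewrite.
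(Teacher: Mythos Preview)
Your argument is correct and follows essentially the same route as the paper's own proof: both deduce that $g_{c_i}(s)=\mathrm{Ind}(c_i)=0$ from the vanishing of all indices, then observe that $\sum_{c_i}w(c_i)-w(K)=0$. You simply spell out in detail the step the paper compresses into one clause.
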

\begin{proof}
Choose a real crossing $c_i$, since all other crossings have trivial indices, then $g_{c_i}(s)=\text{Ind}(c_i)=0$. It follows that
\begin{center}
$F_K(t, s)=\sum\limits_{c_i}w(c_i)t^{g_{c_i}(s)}-w(K)=\sum\limits_{c_i}w(c_i)-w(K)=0$.
\end{center}
\end{proof}

\begin{proposition}
Given a virtual knot diagram $K$, let $m(K)$ denote the virtual knot diagram with all real crossings switched, let $r(K)$ be the virtual knot diagram with the orientation reversed. Then
\begin{center}
$F_{m(K)}(t, s)=-F_K(t^{-1}, s^{-1})$ and $F_{r(K)}(t, s)=F_K(t^{-1}, s)$.
\end{center}
\end{proposition}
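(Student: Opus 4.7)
The plan is to analyze how the Gauss diagram $G(K)$ transforms under each of the operations $m$ and $r$, and then propagate these transformations through the definitions of $\text{Ind}(c)$, $g_c(s)$, and $F_K(t,s)$. In both cases the key will be to show that $g_c(s)$ picks up the substitution $s\mapsto s^{-1}$ (for mirror) or no substitution in $s$ (for orientation reversal), together with an overall sign, and then to combine this with the behavior of the writhes.

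For $m(K)$, switching every real crossing has two effects on $G(K)$: each chord's writhe flips sign, and each chord's direction (from over to under) reverses, which also forces $w(K)\mapsto -w(K)$. For a fixed chord $c$, the simultaneous reversal of $c$'s direction and of each intersecting chord $d$'s direction cancels out in the ``left-to-right versus right-to-left'' test, so $d$ still crosses $c$ in the same direction; only the sign flip of $d$ contributes, yielding $\widetilde r_\pm=r_\mp$, $\widetilde l_\pm=l_\mp$, and hence $\text{Ind}_{m(K)}(c)=-\text{Ind}_K(c)$. In particular $|\text{Ind}_{m(K)}(c)|=|\text{Ind}_K(c)|$, so the quotient map $\phi$ is unchanged. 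Substituting into the defining sum for $g_c$, and using the identity $s^{\phi(-x)}=s^{-\phi(x)}=(s^{-1})^{\phi(x)}$ inside $\mathbb{Z}[s,s^{-1}]/(s^{|\text{Ind}(c)|}-1)$, a short rearrangement gives $g_c^{m(K)}(s)=-g_c^K(s^{-1})$. Combining this with the global sign flip of writhes then produces
\[
F_{m(K)}(t,s)=-\sum_c w_K(c)\,(t^{-1})^{g_c^K(s^{-1})}+w(K)=-F_K(t^{-1},s^{-1}).
\]

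For $r(K)$, reversing the knot's orientation preserves each crossing's writhe (so $w(K)$ and every $w(c)$ are unchanged) and preserves each chord's over-to-under direction, but it reverses the parametrizing circle of $G(K)$; equivalently, one reflects the Gauss diagram. This reflection swaps the two sides of every directed chord, so $\widetilde r_\pm=l_\pm$ and $\widetilde l_\pm=r_\pm$, and again $\text{Ind}_{r(K)}(c)=-\text{Ind}_K(c)$. This time no signs flip, but the roles of the sums over $r_i$ and $l_i$ in the definition of $g_c$ swap, and a direct rearrangement yields $g_c^{r(K)}(s)=-g_c^K(s)$. Since $w(r(K))=w(K)$, this immediately gives $F_{r(K)}(t,s)=F_K(t^{-1},s)$.

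The main obstacle is the bookkeeping for $m(K)$: one must verify that the simultaneous reversal of $c$ and of each intersecting chord $d$ preserves the L-to-R versus R-to-L dichotomy (so that only the sign flip of $d$ matters in the counts $\widetilde r_\pm,\widetilde l_\pm$), and then track how $\phi$, defined modulo $|\text{Ind}(c)|$, converts a sign change of the index inside the exponent into the substitution $s\mapsto s^{-1}$. Once these two observations are pinned down, everything else reduces to mechanical substitution into the definition of $F_K(t,s)$.
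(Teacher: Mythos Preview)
Your proof is correct and follows essentially the same approach as the paper: establish how $w(c)$ and $\text{Ind}(c)$ transform under $m$ and $r$, deduce the transformations $g_{c}^{m(K)}(s)=-g_{c}^{K}(s^{-1})$ and $g_{c}^{r(K)}(s)=-g_{c}^{K}(s)$, and then substitute directly into the definition of $F_K(t,s)$. The paper's proof simply asserts the index and index-function identities without the intermediate bookkeeping on $r_\pm,l_\pm$, whereas you make the combinatorics of the Gauss-diagram reflection and chord reversal explicit; the extra detail you supply (in particular the cancellation of the two reversals in the left-to-right test for $m(K)$, and the use of $s^{\phi(-x)}=(s^{-1})^{\phi(x)}$ in the quotient ring) is exactly what is needed to justify those assertions.
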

\begin{proof}
Choose a chord $c$ in $G(K)$, we denote the corresponding chords in $G(m(K))$ and $G(r(K))$ by $c'$ and $c''$ respectively. According to the definition of chord index, it is not difficult to observe that
\begin{center}
$w(c)=-w(c')=w(c'')$ and $\text{Ind}(c)=-\text{Ind}(c')=-\text{Ind}(c'')$.
\end{center}
It follows that
\begin{center}
$g_{c_i'}(s)=-g_{c_i}(s^{-1})$ and $g_{c_i''}(s)=-g_{c_i}(s)$.
\end{center}
Then if $F_K(t, s)=\sum\limits_{c_i}w(c_i)t^{g_{c_i}(s)}-w(K)$, we have
\begin{center}
$F_{m(K)}(t, s)=\sum\limits_{c_i'}w(c_i')t^{g_{c_i'}(s)}-w(m(K))=-\sum\limits_{c_i}w(c_i)t^{-g_{c_i}(s^{-1})}+w(K)=-F_K(t^{-1}, s^{-1})$,
\end{center}
 and
 \begin{center}
$F_{r(K)}(t, s)=\sum\limits_{c_i''}w(c_i'')t^{g_{c_i''}(s)}-w(r(K))=\sum\limits_{c_i}w(c_i)t^{-g_{c_i}(s)}-w(K)=F_K(t^{-1}, s)$.
 \end{center}
\end{proof}

Recall that the real crossing number of a virtual knot diagram is the number of real crossings of this diagram. In virtual knot theory, the \emph{real crossing number} $c_r(K)$ of a virtual knot $K$ is the smallest number of real crossings of any diagram of $K$. Obviously $c_r(K)=0$ if and only if $K$ is trivial, and there is no virtual knot with $c_r(K)=1$. According to the definition of $F_K(t, s)$, it is easy to see that $F_K(t, s)$ gives a natural lower bounder for $c_r(K)$.
\begin{proposition}
For a given virtual knot $K$, if $F_K(t, s)$ has the form $F_K(t, s)=\sum\limits_{g_i(s)}a_{g_i(s)}t^{g_i(s)}+b$, where $a_{g_i(s)}, b\in \mathbb{Z}$ and $g_i(s)\in \mathbb{Z}[s, s^{-1}]-\{0\}$, then $c_r(K)\geq \sum\limits_{g_i(s)}|a_{g_i(s)}|$.
\end{proposition}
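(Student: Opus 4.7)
The plan is to exploit the fact that $F_K(t,s)$ is a virtual knot invariant, so it may be computed from any diagram of $K$; in particular, from a minimal one. Concretely, I would choose a diagram $D$ of $K$ realizing the real crossing number, so that $D$ has exactly $c_r(K)$ real crossings. Then applying the defining formula,
\[
F_K(t,s) \;=\; \sum_{c\in C_r(D)} w(c)\, t^{g_c(s)} \;-\; w(D),
\]
and collecting terms by the value of the exponent $g_c(s)\in\mathbb{Z}[s,s^{-1}]$, I would read off the coefficients $a_{g_i(s)}$ appearing in the statement.

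The key observation is that for each nonzero exponent $g_i(s)$, the coefficient $a_{g_i(s)}$ is precisely the signed sum
\[
a_{g_i(s)} \;=\; \sum_{\substack{c\in C_r(D) \\ g_c(s)=g_i(s)}} w(c),
\]
in which every summand equals $\pm 1$. The triangle inequality then yields
\[
|a_{g_i(s)}| \;\le\; \#\set{c\in C_r(D) : g_c(s)=g_i(s)}.
\]
Summing over all distinct nonzero exponents $g_i(s)$, and noting that the sets $\set{c: g_c(s)=g_i(s)}$ are pairwise disjoint as $g_i(s)$ varies, I obtain
\[
\sum_{g_i(s)} |a_{g_i(s)}| \;\le\; \#\set{c\in C_r(D) : g_c(s)\neq 0} \;\le\; \#\, C_r(D) \;=\; c_r(K).
\]
The crossings with $g_c(s)=0$, together with $-w(D)$, are precisely what is absorbed into the constant term $b$, so they do not appear on the left-hand side, and the bound is not affected by them.

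There is no real obstacle here; the one subtlety worth mentioning explicitly is to justify that the $a_{g_i(s)}$ appearing in the statement really are the coefficients obtained from the minimal diagram after grouping. This is automatic because $F_K(t,s)$, viewed as a formal $\mathbb{Z}$-linear combination of symbols $t^{g(s)}$ with $g(s)\in\mathbb{Z}[s,s^{-1}]$, is well-defined as an invariant, so its expansion in this basis does not depend on the chosen diagram. Once this is noted, the proof reduces to the triangle inequality argument above.
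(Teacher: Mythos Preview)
Your argument is correct and is exactly the intended one: the paper does not spell out a proof but simply remarks that the bound follows directly from the definition of $F_K(t,s)$, and your use of a minimal diagram together with the triangle inequality is precisely that argument. The only slight imprecision is in the last paragraph, where you view $F_K(t,s)$ as a $\mathbb{Z}$-linear combination of symbols $t^{g(s)}$ with $g(s)\in\mathbb{Z}[s,s^{-1}]$; strictly speaking each exponent $g_c(s)$ lives in the quotient $\mathbb{Z}[s,s^{-1}]/(s^{|\mathrm{Ind}(c)|}-1)$, so the ``basis'' is indexed by such residue classes rather than by honest Laurent polynomials, but this does not affect the counting argument at all.
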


For example, let us consider the virtual knot $K$ described in Figure \ref{figure11}. Direct calculation shows that $W_K(t)=t^2+t+t^{-3}$ and $Z_K(t)=0$. The writhe polynomial tells us that $c_r(K)\geq3$. In contrast we have $F_K(t, s)=t^{s+1}+t+t^{s^{-3}-s^{-2}}+t^{-s^2-s-1}-4$, which implies $c_r(K)\geq4$. Since the Gauss diagram in Figure \ref{figure11} has only 4 chords, we conclude that $c_r(K)=4$.
\begin{figure}
\centering
\includegraphics{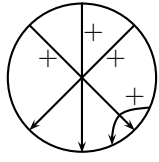}\\
\caption{A virtual knot with $c_r(K)=4$}\label{figure11}
\end{figure}

Next we study the behavior of $F_K(t, s)$ under a connected sum. Because of the forbidden move, in general the connected sum is not well-defined for virtual knots. For example, the connected sum of two trivial knots maybe nontrivial \cite{Kis2004}. Similar to other index type invariants of virtual knots, $F_K(t, s)$ is additive under a connected sum.
\begin{proposition}
Let $K_1$ and $K_2$ be two virtual knots, then $F_{K_1\#K_2}(t, s)=F_{K_1}(t, s)+F_{K_2}(t, s)$. Here $K_1\#K_2$ denotes one of the connected sums of $K_1$ and $K_2$ with an arbitrarily chosen connection place.
\end{proposition}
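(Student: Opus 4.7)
The plan is to work at the level of Gauss diagrams, where the connected sum admits a very clean description: choose a point on the circle of $G(K_1)$ avoiding chord endpoints, do the same for $G(K_2)$, cut both circles there and splice them into a single oriented circle whose chords are exactly the chords of $G(K_1)$ together with those of $G(K_2)$. Crucially, in $G(K_1\#K_2)$ no chord from $G(K_1)$ intersects any chord from $G(K_2)$, since the two sets of chord endpoints lie in disjoint arcs of the new circle.

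From this the main structural fact follows: for every chord $c \in G(K_1)$, the set of chords crossing $c$ (together with their directions and signs) is exactly the same in $G(K_1\#K_2)$ as in $G(K_1)$, and similarly for chords of $G(K_2)$. In particular $\text{Ind}(c)$ is unchanged, so the modulus $|\text{Ind}(c)|$ defining the quotient map $\phi$ is the same in both diagrams. Consequently the index function $g_c(s)$ computed in $G(K_1\#K_2)$ coincides term-by-term with the one computed in $G(K_1)$, and analogously for chords originating in $G(K_2)$. The writhes $w(c)$ are obviously unaffected.

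The last ingredient is the additivity of the total writhe, $w(K_1\#K_2) = w(K_1) + w(K_2)$, which is immediate since $w$ is just the signed sum over all real crossings and the set of real crossings of $K_1\#K_2$ is the disjoint union of those of $K_1$ and $K_2$. Splitting the sum in the definition
\[
F_{K_1\#K_2}(t,s) = \sum_{c_i \in C_r(K_1\#K_2)} w(c_i)\, t^{g_{c_i}(s)} - w(K_1\#K_2)
\]
according to whether $c_i$ belongs to $K_1$ or $K_2$, and substituting the identifications above, yields $F_{K_1}(t,s) + F_{K_2}(t,s)$.

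There is no real obstacle here; the only thing worth emphasizing is the independence from the chosen connection place, which is precisely the point of the proposition and is built into the argument because the Gauss-diagram description above never uses the location of the cuts. The proof does not contradict the fact that $K_1\#K_2$ itself can depend on the choice of connection place (as happens already for trivial knots by \cite{Kis2004}): different choices produce different virtual knots, but each of them has the same $F$-invariant, namely $F_{K_1}+F_{K_2}$.
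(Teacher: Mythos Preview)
Your proof is correct and follows essentially the same approach as the paper: the key observation is that in $G(K_1\#K_2)$ the chords coming from $G(K_1)$ have no intersections with those from $G(K_2)$, so all indices, index functions, and writhes are preserved and the sum splits additively. The paper states exactly this in two sentences and leaves the remaining details to the reader, whereas you have written them out carefully.
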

\begin{proof}
It suffices to notice that in $G(K_1\#K_2)$ the chords from $G(K_1)$ have no intersections with the chords from $G(K_2)$. The result follows directly from the definition.
\end{proof}

This proposition reveals a shortcoming of $F_K(t, s)$. If a nontrivial virtual knots is a connected sum of two trivial knots, then we can not detect it via calculating the $F_K(t, s)$ of it. For example, the Kishino Knot depicted in Figure \ref{figure10}  has trivial $F_K(t, s)$. However it is well known that the Kishino Knot is nontrivial (in fact even the corresponding flat virtual knot of it is nontrivial), which can be detected by, for example, the arrow polynomial \cite{Kau2012}. We remark that the Kishino Knot also provides a counterexample to the first question we listed in the end of Section 2, since the involutory quandle of Kishino Knot is trivial. Further, if each crossing of $K$ has index zero, Proposition 5.1 tells us that $F_K(t, s)=0$. The Kishino Knot is a special case of this. For writhe polynomial, if one chord has index zero then it has no contribution to the writhe polynomial. For $F_K(t, s)$, the contribution comes from index zero chord may not be zero. However $F_K(t, s)$ is powerless to distinguish a virtual knot from the unknot if each chord of it has index zero. It is a challenging question to deal with this kind of knots by introducing some extended index invariants. The three loop isotopy invariant \cite{Chr2014} defined by assigning a weight to a pair of  non-intersecting chords may offer some hints to this problem.
\begin{figure}
\centering
\includegraphics{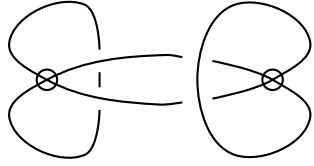}\\
\caption{The Kishino Knot}\label{figure10}
\end{figure}

Finally, let us take a moment to discuss the degree of the invariant we introduced in the present paper. In virtual knot theory, there are mainly two kinds of finite-type invariants. One was introduced by L. Kauffman in his introductory paper of virtual knot theory \cite{Kau1999}, the other one was proposed by Goussarov, Polyak, and Viro in \cite{Gou2000}. Here we will focus on the Kauffman finite-type invariants, for the GPV finite-type invariants we refer readers to \cite{Gou2000}. We remark that if a virtual knot invariant is a GPV finite-type invariant then it must be a Kauffman finite-type invariant. On the other hand, if a knot diagram is classical, in this case Kauffman finite-type invariants are exactly the classical finite-type invariants \cite{Bir1993}. Recall that a virtual knot invariant $f$ taking values in an abelian group is called a \emph{finite-type invariant of degree $\leq n$}, if for any virtual knot diagram $K$ with $n+1$ singular crossings we have
\begin{center}
$\sum\limits_{\sigma\in\{0, 1\}^{n+1}}(-1)^{|\sigma|}f(K_{\sigma})=0$.
\end{center}
Here $\sigma$ runs over all $(n+1)$-tuples of zeros and ones, $|\sigma|$ denotes the number of ones in $\sigma$ and $K_{\sigma}$ is obtained from $K$ by replacing the $i$-th singular crossing with a positive (negative) crossing if the $i$-th position of $\sigma$ is zero (one). The minimal $n$ is said to be the \emph{degree} of $f$.
\begin{proposition}
The invariant $F_K(t, s)$ is a finite-type invariant of degree one.
\end{proposition}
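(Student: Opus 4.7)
The proposition has two parts: that $F_K$ has degree at most one, and that $F_K$ is not of degree zero. The second is immediate from the examples in Section~3, e.g.\ the virtual knot in Figure~11 has $F_K(t,s)\neq 0$, so $F_K$ is not a constant function of the virtual knot.

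For degree $\leq 1$, fix a virtual knot diagram $K$ with two designated singular crossings $p,q$ and its four resolutions $K_\sigma$, $\sigma\in\{0,1\}^2$; set $\eta_i=(-1)^{\sigma_i}$, so that $(-1)^{|\sigma|}=\eta_p\eta_q$. The plan is to verify
\[ \Sigma\ :=\ \sum_{\eta_p,\eta_q\in\{\pm 1\}}\eta_p\eta_q\,F_{K_\sigma(\eta)}(t,s)\ =\ 0 \]
by expanding $F_K=\sum_c w(c)(t^{g_c(s)}-1)$, observing that the constant $-w(K)$ piece vanishes under the alternating sum, and splitting $\Sigma=\Sigma_p+\Sigma_q+\sum_{d\neq p,q}\Sigma_d$ by indexing chord. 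Two observations drive the analysis. First, a Vassiliev crossing change at $c_i$ corresponds in the Gauss diagram to simultaneously reversing the direction of the chord $c_i$ and flipping its sign; by property~(6) of Section~2 this preserves $\text{Ind}$ on every other chord, so the quotient maps $\phi_{c'}$ agree across all four resolutions. Second, under such a switch at $c_i$, the contribution of $c_i$ to $g_c$ (for $c\neq c_i$) changes from $\alpha\,s^{\phi_c(\text{Ind}(c_i))}$ to $\alpha\,s^{-\phi_c(\text{Ind}(c_i))}$, with the sign coefficient $\alpha\in\{\pm 1\}$ preserved.

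For $d\neq p,q$ this additive structure lets one write $g_d(\eta_p,\eta_q)=G_d+\alpha_p^{(d)}\,s^{\eta_p\epsilon_p^{(d)}X_p^{(d)}}+\alpha_q^{(d)}\,s^{\eta_q\epsilon_q^{(d)}X_q^{(d)}}$, with $X_i^{(d)}=\phi_d(\text{Ind}(c_i))$, and factoring the exponentials yields
\[ \Sigma_d\ =\ w(d)\,t^{G_d}\prod_{i\in\{p,q\}}\bigl(t^{\alpha_i^{(d)}s^{\epsilon_i^{(d)}X_i^{(d)}}}-t^{\alpha_i^{(d)}s^{-\epsilon_i^{(d)}X_i^{(d)}}}\bigr). \]
For $\Sigma_p$ one exploits the duality $g_p(-1,\eta_q)=-g_p(+1,\eta_q)(s^{-1})$ together with $w(p)=\eta_p$ to express $\Sigma_p$ in terms of $g_p(+1,+1)$ and $g_p(+1,-1)$ and their $s\mapsto s^{-1}$ duals; symmetrically for $\Sigma_q$. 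Assembling the three pieces and organising the computation by triples $(p,q,d)$ of pairwise-intersecting chords, the cancellation reduces to a triangle identity among chord indices --- essentially the relation $\text{Ind}(a)-\text{Ind}(b)+\text{Ind}(c)=0$ of Lemma~4.1, together with its straightforward variants --- applied to each such $(p,q,d)$ triple.

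The main obstacle is the sign and direction bookkeeping. A Vassiliev crossing change flips \emph{both} the direction and the sign of a chord in the Gauss diagram, not merely the sign (as in virtualisation); only this double flip preserves $\text{Ind}$ on other chords, which is essential for the computation to go through. Beyond that, identifying the correct combinatorial identity that cancels the cross terms between $\Sigma_p$, $\Sigma_q$, and the triangle contributions $\Sigma_d$ requires a case analysis on the relative positions of $p$, $q$, and each shared neighbour $d$ in the Gauss diagram, paralleling the case-work in the $\Omega_{3a}$-invariance check of Section~4.
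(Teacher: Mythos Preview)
Your second key observation is wrong, and this error sends the argument down an unnecessarily complicated path. You claim that under a crossing change at $c_i$ the contribution of $c_i$ to $g_c(s)$ (for $c\neq c_i$) transforms as $\alpha\,s^{\phi_c(\text{Ind}(c_i))}\mapsto \alpha\,s^{-\phi_c(\text{Ind}(c_i))}$. In fact the contribution is \emph{invariant}. Suppose in $K_{00}$ the chord $c_i$ crosses $c$ from left to right, contributing $w(c_i)\,s^{\phi_c(\text{Ind}(c_i))}$ to $g_c(s)$. After switching $c_i$, the chord reverses direction (so it now crosses $c$ from right to left and enters the $l$-sum with its overall minus sign), $w(c_i)$ flips, and $\text{Ind}(c_i)$ flips. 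The new contribution is therefore
\[
-\bigl(-w(c_i)\bigr)\,s^{\phi_c\left(-(-\text{Ind}(c_i))\right)}\;=\;w(c_i)\,s^{\phi_c(\text{Ind}(c_i))},
\]
unchanged. (The map $\phi_c$ itself is stable since $|\text{Ind}(c)|$ is preserved by property~(6).)

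With this corrected, the proof collapses: for every chord $d\neq p,q$, both $w(d)$ and $g_d(s)$ are identical across all four resolutions, so $\Sigma_d=0$ directly---in your own notation, each factor in the product should read $t^{\alpha_i s^{X}}-t^{\alpha_i s^{X}}=0$. Only the contributions from $p$, $q$, and the $-w(K)$ term survive, and those cancel in four lines using the duality $g_p^{(-)}(s)=-g_p^{(+)}(s^{-1})$ you already identified. No triangle identities, no Lemma~4.1, and no case analysis on relative positions of $p$, $q$, $d$ are required; this is precisely the paper's argument.
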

\begin{proof}
It is sufficient to show that for any virtual knot diagram $K$ with two singular crossings, we have $F_{K_{00}}(t, s)-F_{K_{01}}(t, s)-F_{K_{10}}(t, s)+F_{K_{11}}(t, s)=0$.

Let $a_{\sigma}, b_{\sigma}$ denote the two real crossings obtained from the singular crossings in $K_{\sigma}$ $(\sigma\in\{00, 01, 10, 11\})$. First we assume that the chord corresponding to $a_{\sigma}$ has no intersection with the chord corresponding to $b_{\sigma}$. It is not difficult to observe that $\text{Ind}(a_{00})=\text{Ind}(a_{01})=-\text{Ind}(a_{10})=-\text{Ind}(a_{11})$, $\text{Ind}(b_{00})=\text{Ind}(b_{10})=-\text{Ind}(b_{01})=-\text{Ind}(b_{11})$, and the index of any other chord in $K_{\sigma}$ $(\sigma\in\{00, 01, 10, 11\})$ is equivalent. It follows that for any real crossing $c$ in $K$, the corresponding chords in $K_{\sigma}$ $(\sigma\in\{00, 01, 10, 11\})$ have the same index function. Therefore we only need to consider the contributions from $a_{\sigma}, b_{\sigma}$ to $F_{K_{00}}(t, s)-F_{K_{01}}(t, s)-F_{K_{10}}(t, s)+F_{K_{11}}(t, s)$, which equals
\begin{center}
$t^{g_a(s)}+t^{g_b(s)}-t^{g_a(s)}+t^{-g_b(s^{-1})}+t^{-g_a(s^{-1})}-t^{g_b(s)}-t^{-g_a(s^{-1})}-t^{-g_b(s^{-1})}=0$.
\end{center}
Here $g_a(s)$ $(g_b(s))$ denotes the index function of $a_{00}$ $(b_{00})$ in $K_{00}$.

When the corresponding chord of $a_{\sigma}$ intersects the chord corresponding to $b_{\sigma}$ in $K_{\sigma}$, all arguments above are still valid. The details are left to the reader.
\end{proof}

Roughly speaking, the reason why $F_K(t, s)$ is a finite-type invariant of degree one is that $F_K(t, s)$ is a weighted sum of each chord in the Gauss diagram. If one wants to define a finite-type invariant of higher degree, a sensible idea is to associate a weight to each combinatorial structure of several chords, then take the sum over all subgraphs with this structure in the Gauss diagram. Recently, a family of finite-type invariants of degree two were defined in this way by Chrisman and Dye in \cite{Chr2014}. It is an interesting question to find some explicit virtual knot invariants with higher degrees.

\bibliographystyle{amsplain}

\end{document}